\documentclass{article}
\usepackage{amsmath,amssymb}
\usepackage{enumerate}

\newtheorem{theorem}{Theorem}[section]
\newtheorem{lemma}{Lemma}[theorem]
\newtheorem{proposition}[theorem]{Proposition}
\newtheorem{corollary}[theorem]{Corollary}
\newtheorem{definition}[theorem]{Definition}

\newtheorem{question}[theorem]{Question}
\newtheorem{remark}[theorem]{Remark}

\newcommand{\Z}{\mathbb{Z}}

\newcommand{\id}{\operatorname{id}}
\newcommand{\Id}{\operatorname{Id}}

\newcommand{\aut}{\operatorname{Aut}}

\newcommand{\hol}{\operatorname{Hol}}
\newcommand{\ord}[1]{\left\vert #1\right\vert}
\newcommand{\lie}[1]{\left[ #1\right]}

\newcommand{\mlie}[2]{#1#2-#2#1}

\newenvironment{proof}{\par\noindent{\bf Proof.}}{$\qed$\par\bigskip}
\newcommand{\qed}{\enspace\vrule  height6pt  width4pt  depth2pt}

\title{Counterexample to a conjecture about braces}
\author{David Bachiller\footnote{Research partially supported by a grant of MICIIN (Spain)
MTM2011-28992-C02-01, and MTM2014-53644-P.}}
\date{}

\begin{document}
\maketitle

\abstract{We find an example of a finite solvable group (in fact, a finite $p$-group) without 
any left brace structure (equiv. which is not an IYB group). Our argument is an improvement of an argument of Rump, 
using previous work in other areas of Burde, and of Featherstonhaugh, Caranti and Childs, 
which we relate to brace theory.}

{\bf Keywords:} braces, IYB group, bijective $1$-cocycle, radical rings, Hopf-Galois extensions, nilpotent group, Lie algebras, 
LSA structures

{\bf MSC:} Primary 16N20, 20D15, 81R50; Secondary 16T25, 12F10, 17B30, 17B60.

\section{Introduction}

In the last thirty years, the study of the Yang-Baxter equation has increased tremendously, motivated
both by its applications in theoretical physics, and by
its connections to many topics in mathematics. The construction of all the solutions of this equation is a widely open 
problem, so one tries to study particular classes of solutions. One of these classes, the non-degenerate involutive set-theoretical 
solutions, has received a lot of attention recently (see the introduction of \cite{CJO}, and the references there). 

The study of this class of solutions was initiated in \cite{ESS} and \cite{GVdB}. One of the techniques used in 
\cite{ESS} was to associate a group to any non-degenerate involutive set-theoretical 
solution, called the permutation group of the solution. This group carries a lot of information
about the solutions, and it has been very important to study them.
In fact, in \cite{CJR} (where the finite permutation groups are called IYB groups), it was proposed to study this class of groups as
a first step to construct and classify non-degenerate involutive set-theoretical 
solutions of the Yang-Baxter equation. 

It would be very useful to characterize which groups are IYB groups, independently of showing a particular solution.  
It is known that any IYB group is a solvable group \cite[Theorem 2.15]{ESS}, 
and this raises the question whether the converse is true:

\begin{question}\label{conjecture}
Let $G$ be a finite solvable group. Is $G$ an IYB group?
\end{question}

In order to study the class of non-degenerate involutive set-theoretic solutions, 
Rump in \cite{Rump1} introduced a new algebraic structure called brace. 
A left brace is a set $B$ with two operations, a sum $+$ and a multiplication $\cdot$, such that $(B,+)$ is an abelian group, 
$(B,\cdot)$ is a group, and for any $a,b,c\in B$, $a\cdot (b+c)+a=a\cdot b+a\cdot c$. A right brace is defined analogously, changing 
the last property by $(b+c)\cdot a+a=b\cdot a+c\cdot a$. A left and right brace is called a two-sided brace. 

It turned out (see \cite[Theorem 2.1]{CJR}) that 
a finite group is an IYB group if and only if it is the multiplicative of a left brace, and that this 
additional algebraic structure over IYB groups is helpful to attack some problems. Hence we can reformulate 
Question \ref{conjecture} in terms of left braces as:

\paragraph{Question \ref{conjecture} (equivalent reformulation)} \textit{Let $G$ be a 
finite solvable group. Is $G$ the multiplicative group of a left brace?}
\bigskip

Some results supporting a positive answer to the question can be found in \cite{NG,CJR,CJO2,Florian}. 
In \cite{NG,CJR}, it is proved that some classes of finite solvable groups are the multiplicative group of a left brace; namely, 
abelian groups, nilpotent groups of class $2$, Hall subgroups of the multiplicative group of a left brace, abelian-by-cyclic groups,
 and solvable A-groups (i.e. solvable groups such that all their Sylow $p$-subgroups are abelian).  In \cite{CJR}, 
it is also proved that any finite solvable group is isomorphic to a subgroup of the multiplicative group of a 
left brace.
In \cite{CJO2}, there are also positive results for some nilpotent groups of class $3$, and for some metabelian groups, 
but the answer in general for nilpotent groups of class greater or equal than $3$ and for metabelian groups is not 
known. In \cite{Florian}, there is explained a computer procedure that shows that any solvable group of order $\leq 200$ and 
any $p$-group of order $<1024$ is the multiplicative group of a left brace.

In spite of all these positive results, in this paper we answer Question \ref{conjecture} in the negative, by 
providing the first known example of a $p$-group which is not the multiplicative group of any left brace. To find this group, we use the 
following argument, suggested in \cite[Section 12]{Rump}:
for any finite left brace $B$ with additive group isomorphic to $A$, there exists an injective morphism from $(B,\cdot)$ to 
$A\rtimes\aut(A)$ (see Proposition \ref{monomorph}). So if we are able to find a finite solvable group $G$ with no injective morphism 
to $A\rtimes \aut(A)$ for any abelian group $A$ such that $\ord{G}=\ord{A}$, then $G$ is an example of negative answer to Question 
\ref{conjecture}. 

In \cite{Rump}, it is suggested as a possible counterexample an $11$-group $G$ of order $11^{10}$ coming from the theory of Lie algebras. But 
in that paper one can only find a vague argument to see that there is no injective morphism from 
$G$ to $A\rtimes\aut(A)$ for $A=(\Z/(p))^{10}$, and 
there is no argument for the other abelian groups with $\ord{A}=\ord{G}$. 
Hence in \cite[Section 12]{Rump} there is only a possible idea to find a counterexample, but the argument is not complete for two reasons:
\begin{enumerate}[(a)]
\item There is no argument for the other additive groups not isomorphic to $(\Z/(p))^{10}$, and
\item The argument for $A=(\Z/(p))^{10}$ depends on the fact that some computer calculations of \cite{Burde} over $\mathbb{C}$ remain true 
over $\mathbb{F}_p$, for $p=11$. This is far from clear, and no computer programs are provided in \cite{Rump}. 
\end{enumerate}
In fact, the computer programs of \cite{Burde} do not work for $p=11$ (see Remark \ref{errorRump}), 
so the vague argument in \cite{Rump} for $A=(\Z/(p))^{10}$ is not even 
correct. 

In this article,
we complete points (a) and (b) for a suitable $p$ to find our counterexample. For (a), we prove 
some general results about the possible additive group of a left brace of order a power of $p$. In fact, 
for $p$ big enough, we prove that there is only one possible additive group, and that allows us to 
restrict the argument to the case of additive group isomorphic to $(\Z/(p))^{10}$. For (b), 
we manage to prove (theoretically) that the computer calculations of \cite{Burde} over $\mathbb{C}$ 
remain true over $\mathbb{F}_p$ if $p$ is big enough. So we settle (b) for some $p$ without using a computer.
However, observe that the argument still depends in the calculations of \cite{Burde} over $\mathbb{C}$.

The organization of the paper is as follows: first, Section 2 contains the results about the additive group 
of a left brace that are necessary to complete (a) for our case. These results are generalizations of results in the 
theory of Hopf-Galois extensions, and at the end of this section we explain the connexion between this type of Hopf
algebras and brace theory. 
Then, in Section 3 we recall the Lazard's correspondence, which is useful to translate results about nilpotent Lie algebras 
to results about $p$-groups. We need it because, in Section 4, we present our counterexample as a nilpotent Lie algebra 
over $\mathbb{F}_p$, and then we apply the Lazard's correspondence to obtain it in group form. We follow this procedure 
because the calculations of \cite{Burde} for characteristic $0$ come from a Lie algebra over $\mathbb{C}$. In Section 4, 
we prove that these computations are also correct in the characteristic $p$ case, for $p$ big enough, in a theoretical way. 
We hope that arguments of this kind, relating results over $\mathbb{C}$ and over $\mathbb{F}_p$, will be useful in the future
in brace theory.

\section{Restrictions over the additive group of a finite left brace}

\begin{definition}
A left brace is a set $B$ with two binary operations, a sum $+$ and a multiplication $\cdot$, such that
$(B,+)$ is an abelian group, $(B,\cdot)$ is a group, and any $a,b,c\in B$ satisfies
$$
a\cdot (b+c)+a=a\cdot b+a\cdot c.
$$
\end{definition}

In a left brace $B$, for each $g\in B$ define a map $\lambda_g: B\to B$ by $\lambda_g(h):=g\cdot h-g$. 
Then, $\lambda_g$ is an automorphism of $(B,+)$ for every $g$, and the map $\lambda: (B,\cdot)\to\aut(B,+)$, 
$g\mapsto \lambda_g$, is a morphism of groups (see \cite[Lemma 1]{CJO}).  

\begin{definition}
Let $G$ be a group. The holomorf of $G$, denoted by $\hol(G)$, is defined as
$$
\hol(G):=G\rtimes \aut(G).
$$
\end{definition}

Now we restrict to the case of $\hol(A)$ for some abelian group $A$. 
 The following result is an easy generalization of \cite[Theorem 1]{CR}, which gives an equivalence between left braces 
and regular subgroups of the holomorf. 
Recall that a regular subgroup of $\hol(A)$ is a subgroup $H\leq \hol(A)$ such that for any $w\in A$ there exists a unique 
$(v,M)\in H$ such that $(v,M)(w)=v+M(w)=0$.

Denote by $\pi_1$ and $\pi_2$ the two maps $\pi_1:\hol(A)\to A$, $\pi_1(v,M)=v$, and $\pi_2:\hol(A)\to \aut(A)$, $\pi_2(v,M)=M$. 

\begin{proposition}\label{transl}
Let $A$ be an abelian group.
\begin{enumerate}[(1)]
\item Let $B$ be a left brace with additive group $A$. Then, $\{(a,\lambda_a):a\in A\}$ is a 
regular subgroup of $\hol(A)$.

Conversely, if $H$ is a regular subgroup of $\hol(A)$, we have $\pi_1(H)=A$, and 
the abelian group $A$ with the product
$$a\cdot b:=a+\pi_2((\left.\pi_1\right|_H)^{-1}(a))(b)$$
is a left brace with multiplicative group isomorphic to $H$.

\item This defines a bijective correspondence between left braces with additive group $A$, and regular
subgroups of $\hol(A)$. Moreover, 
isomorphic left braces correspond to conjugate subgroups of $\hol(A)$ by elements of $\aut(A)$.
\end{enumerate}
\end{proposition}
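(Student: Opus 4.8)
The plan is to verify that the two assignments in part (1) are well defined, that they are mutually inverse, and that they intertwine brace isomorphisms with $\aut(A)$-conjugacy. The computational engine throughout is the identity $g+\lambda_g(h)=g+(g\cdot h-g)=g\cdot h$, valid in any left brace, together with the multiplication $(v,M)(w,N)=(v+M(w),MN)$ and the action $(v,M)(w)=v+M(w)$ in $\hol(A)$.

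First I would treat the forward direction of (1) by considering the map $\Lambda\colon (B,\cdot)\to\hol(A)$, $a\mapsto(a,\lambda_a)$. Since $\lambda$ is a homomorphism we have $\lambda_a\lambda_b=\lambda_{a\cdot b}$, and since $a+\lambda_a(b)=a\cdot b$ the product $\Lambda(a)\Lambda(b)=(a+\lambda_a(b),\lambda_a\lambda_b)$ equals $\Lambda(a\cdot b)$; injectivity is immediate from the first coordinate, so $\im\Lambda$ is a subgroup isomorphic to $(B,\cdot)$. For regularity I would compute $(a,\lambda_a)(w)=a+\lambda_a(w)=a\cdot w$; using that the multiplicative identity coincides with $0$ (because $\lambda_0=\id$ forces $h-0=h$), the equation $a\cdot w=0$ has the unique solution $a=w^{-1}$, giving exactly one element of $\im\Lambda$ annihilating each $w$.

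For the converse I would first record the correct reading of ``regular'': the natural action of $H$ on $A$ is simply transitive, which is equivalent to $\pi_1|_H\colon H\to A$ being a bijection (the stated condition says $h\mapsto h^{-1}(0)$ is a bijection, and composing with inversion shows $h\mapsto h(0)=\pi_1(h)$ is too). Hence $\pi_1(H)=A$ and $(\pi_1|_H)^{-1}$ exists; writing $(\pi_1|_H)^{-1}(a)=(a,M_a)$ gives $a\cdot b=a+M_a(b)$. To see this is a group isomorphic to $H$, I would show $\Phi\colon a\mapsto(a,M_a)$ intertwines $\cdot$ with the law of $H$: the product $(a,M_a)(b,M_b)=(a+M_a(b),M_aM_b)$ lies in $H$ and has first coordinate $a\cdot b$, so by bijectivity of $\pi_1|_H$ it must equal $(a\cdot b,M_{a\cdot b})$, whence $\Phi(a)\Phi(b)=\Phi(a\cdot b)$ and $(A,\cdot)$ is the transported group structure. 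The brace axiom is then a one-line check: since each $M_a$ is additive and $+$ is commutative, both $a\cdot(b+c)+a$ and $a\cdot b+a\cdot c$ reduce to $2a+M_a(b)+M_a(c)$.

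For part (2) I would check the two constructions are inverse to each other: from a brace one recovers $M_a=\lambda_a$, hence the original product via $a+\lambda_a(b)=a\cdot b$; from a regular $H$ one recovers $\lambda_a=M_a$, hence $H$ itself. For the isomorphism statement I would compute that conjugation by $(0,N)$ with $N\in\aut(A)$ sends $(v,M)\mapsto(N(v),NMN^{-1})$, and then verify, using $\lambda^i_a(b)=a\cdot_i b-a$, that $N$ is a brace isomorphism $B_1\to B_2$ if and only if $\lambda^2_{N(a)}=N\lambda^1_aN^{-1}$ for all $a$ — which is precisely the condition that conjugation by $(0,N)$ carries the regular subgroup of $B_1$ onto that of $B_2$. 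The only genuinely non-formal point is the equivalence ``$H$ regular $\iff\pi_1|_H$ bijective''; everything else is careful bookkeeping with the semidirect product, and I expect the appearances of $N$ and $N^{-1}$ in the conjugation computation of part (2) to be the place demanding the most precision.
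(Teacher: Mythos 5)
Your proposal is correct and takes essentially the same route as the paper: the same embedding $a\mapsto(a,\lambda_a)$ with regularity checked via $a=w^{-1}$, the same product on $A$ defined through $(\left.\pi_1\right|_H)^{-1}$ (your $M_a$ is the paper's $\phi(a)$), and the same conjugation computation $(0,N)(v,M)(0,N)^{-1}=(N(v),NMN^{-1})$ for part (2). The only differences are that you fill in details the paper elides --- the justification that regularity makes $\left.\pi_1\right|_H$ bijective, the brace-axiom verification that the paper calls ``a straightforward exercise,'' and the converse of the conjugacy statement, which the paper dismisses as ``analogous'' but which you obtain cleanly from the equivalence $\lambda^{2}_{N(a)}=N\lambda^{1}_{a}N^{-1}$.
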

\begin{proof}

To prove (1), observe that $(0,\id)=(0,\lambda_0)$, that $(a,\lambda_a)(b,\lambda_b)=(a+\lambda_a(b),\lambda_a\lambda_b)=
(ab,\lambda_{ab})$, and that $(a,\lambda_a)^{-1}=(-\lambda_a^{-1}(a),\lambda_a^{-1})=(a^{-1},\lambda_{a^{-1}})$, so 
$H=\{(a,\lambda_a):a\in A\}$ is a subgroup of $\hol(A)$. To check that it is regular, given 
$b\in A$, take $a=b^{-1}$. Then $(a,\lambda_a)(b)=a+\lambda_a(b)=a\cdot b=b^{-1}\cdot b=0$, and $(a,\lambda_a)$ with this property 
is unique because the inverse element of $b$ in $(B,\cdot)$ is unique. 

Conversely, if $H$ is a regular subgroups of $\hol(A)$, the regularity implies that, for all $a\in A$, there exists 
a unique $(v,M)\in H$ such that $a=(v,M)(0)=v+M(0)=v$. Hence $\pi_1(H)=A$. Then we can write any element of $H$ has $(a,\phi(a))$
for some map $\phi: A\to\aut(A)$.
Since $(a,\phi(a))(b,\phi(b))=(a+\phi(a)(b),\phi(a)\phi(b))$, the map $\phi$ satisfies $\phi(a)\phi(b)=\phi(a+\phi(a)(b))$. 
We define a product over $A$ by
$$
a\cdot b:=a+\pi_2((\left.\pi_1\right|_H)^{-1}(a))(b)=a+\phi(a)(b).
$$
To check that it defines a structure of left brace is a straightforward exercise. 

In (2), the bijective correspondence is clear by (1). Then, if $B_1$ and $B_2$ are two left braces with additive group 
equal to $A$ such that $\varphi: B_1\to B_2$ is an isomorphism of left braces, note that $\varphi$ is in particular an 
automorphism of $A$. Then, in $\hol(A)$, $B_1$ and $B_2$ are conjugate by $(0,\varphi)$ because
\begin{eqnarray*}
(0,\varphi)(g,\lambda^{(1)}_g)(0,\varphi)^{-1}&=&(\varphi(g),\varphi\lambda^{(1)}_g)(0,\varphi^{-1})=
(\varphi(g),\varphi\lambda_g^{(1)}\varphi^{-1})\\
&=&(\varphi(g),\lambda_{\varphi(g)}^{(2)}).
\end{eqnarray*}
The converse is analogous.
\end{proof}

The interesting part for our purposes is the following corollary of the last proposition.

\begin{proposition}\label{monomorph}
Let $B$ be a left brace. Then, the map
$$
\begin{array}{cccc}
\gamma:& (B,\cdot)& \to & \hol(B,+)\\
	&  g&		\mapsto &	(g,~\lambda_g),
\end{array}
$$
is a monomorphism of groups. 
\end{proposition}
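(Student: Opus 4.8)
The plan is to observe that $\gamma$ is nothing other than the map whose image is the subgroup $H=\{(a,\lambda_a):a\in A\}$ already produced in Proposition \ref{transl}, so that the bulk of the verification is available. Concretely, writing $A=(B,+)$, I would establish the homomorphism property and the injectivity separately, and for both I rely only on the definition $\lambda_g(h)=g\cdot h-g$, the brace axioms packaged in the statement that $\lambda\colon (B,\cdot)\to\aut(B,+)$ is a group morphism, and the semidirect-product multiplication rule on $\hol(A)$.

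First I would check that $\gamma$ is a group homomorphism. Applying the multiplication in $\hol(B,+)$ to two image elements gives
$$
\gamma(g)\gamma(h)=(g,\lambda_g)(h,\lambda_h)=(g+\lambda_g(h),\,\lambda_g\lambda_h).
$$
For the first coordinate I substitute the definition of $\lambda_g$ to get $g+\lambda_g(h)=g+(g\cdot h-g)=g\cdot h$; for the second coordinate I invoke the fact that $\lambda$ is a morphism of groups, so $\lambda_g\lambda_h=\lambda_{g\cdot h}$. Hence
$$
\gamma(g)\gamma(h)=(g\cdot h,\,\lambda_{g\cdot h})=\gamma(g\cdot h),
$$
which is exactly the required identity. (This is the same computation $(a,\lambda_a)(b,\lambda_b)=(ab,\lambda_{ab})$ carried out in the proof of Proposition \ref{transl}, now read as a statement about $\gamma$.)

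Next I would verify injectivity, which is immediate: the composite $\pi_1\circ\gamma$ equals the identity on the underlying set of $B$, since $\pi_1(g,\lambda_g)=g$. Thus if $\gamma(g)=\gamma(h)$, comparing first coordinates forces $g=h$, so $\ker\gamma$ is trivial and $\gamma$ is a monomorphism.

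There is no real obstacle here: all the substance sits in the two ingredients recalled before the statement, namely that each $\lambda_g$ is an additive automorphism and that $g\mapsto\lambda_g$ is multiplicative, both of which are consequences of the brace axiom $a\cdot(b+c)+a=a\cdot b+a\cdot c$. If one wanted a self-contained argument not leaning on Proposition \ref{transl}, the only point requiring genuine work would be re-deriving that $\lambda$ is a homomorphism; once that is in hand, the homomorphism and injectivity checks for $\gamma$ are purely formal.
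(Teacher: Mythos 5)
Your proposal is correct and follows essentially the same route as the paper: the homomorphism property is verified via the semidirect-product multiplication together with $g+\lambda_g(h)=g\cdot h$ and $\lambda_g\lambda_h=\lambda_{gh}$, and injectivity is read off from the first coordinate. You merely make explicit the substitutions that the paper's proof leaves implicit, so nothing is missing and nothing differs in substance.
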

\begin{proof}
It is clear that the map is injective (since $(g,\lambda_g)=(h,\lambda_h)$ implies in particular $g=h$), 
so the only thing to check is $\gamma(gh)=\gamma(g)\gamma(h)$, for all $g,h\in B$. 
But this is a combination of the definition of the product in a semidirect product and properties of 
the lamba map in left braces gives
$$
\gamma(g)\gamma(h)=(g,\lambda_g)(h,\lambda_h)=(g+\lambda_g(h),\lambda_g\lambda_h)=(gh,\lambda_{gh})=\gamma(gh).
$$
\end{proof}

Let $B$ be a finite left brace, and let $x\in B$. We denote by $o_{\cdot}(x)$ the multiplicative order of $x$,
 and we denote by $o_{+}(x)$ the additive order of $x$. 
The following result gives some restrictions over the additive group that a left brace can have.

\begin{theorem}\label{feather}
Let $p$ be a prime number, and $m$ a positive integer. Let $B$ be a finite left brace with 
$$
(B,+)\cong \Z/(p^{\alpha_1})\times\cdots\times\Z/(p^{\alpha_m}),
$$
for certain $\alpha_i\in\Z$ such that $1\leq\alpha_1\leq \alpha_2\leq\dots\leq\alpha_m$. 
Assume that $m+2\leq p$. Then, $o_{\cdot}(x)=o_{+}(x)$ for any $x\in B$. 
Moreover, if $(B,\cdot)$ is abelian, then $(B,\cdot)\cong (B,+)$.

In particular, if $\ord{B}=p^n$ and $n+2\leq p$, then $o_{\cdot}(x)=o_{+}(x)$ for any $x\in B$.
\end{theorem}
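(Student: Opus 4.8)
The plan is to work through the monomorphism $\gamma\colon (B,\cdot)\to\hol(B,+)$ of Proposition \ref{monomorph}, which identifies the multiplicative order $o_\cdot(x)$ with the order of $(x,\lambda_x)$ in $\hol(A)$, where $A=(B,+)$. Since $\lambda$ is a group homomorphism, $\lambda_{x^{n-1}}=\lambda_x^{n-1}$, and unwinding $x^n=x^{n-1}\cdot x=x^{n-1}+\lambda_{x^{n-1}}(x)$ gives the closed form
$$x^n=\sum_{k=0}^{n-1}\lambda_x^{k}(x).$$
Writing $\nu:=\lambda_x-\id$, so that $\nu(y)=\lambda_x(y)-y=x\cdot y-x-y$, and using the hockey-stick identity, this becomes
$$x^n=\sum_{i\ge 1}\binom{n}{i}\,\nu^{\,i-1}(x).$$
Everything then reduces to understanding the endomorphism $\nu$ of $A$ together with the $p$-adic valuations of the coefficients $\binom{n}{i}$; this is where the hypothesis $m+2\le p$ must enter.

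I would then record two structural facts. First, $\nu$ is nilpotent: reducing modulo $p$, the image $\overline{\lambda_x}$ lies in $\aut(A/pA)\cong GL_m(\mathbb F_p)$ and has $p$-power order, hence is unipotent, so $\overline{\nu}$ is nilpotent; consequently $\nu$ is nilpotent on $A$ and, more precisely, $\nu^{m}$ drops one step of the $p$-adic filtration, $\nu^{m}(p^{k}A)\subseteq p^{k+1}A$. Second, because each $\lambda_x$ is additive, for every $y$ one has $o_+(y)\cdot\nu(y)=\lambda_x\!\big(o_+(y)\,y\big)-o_+(y)\,y=0$, so $o_+(\nu(y))$ divides $o_+(y)$; iterating, $o_+(\nu^{\,k}(x))$ divides $o_+(x)$ for all $k$.

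Granting for a moment the nilpotency input $\nu^{p-1}(x)=0$, the theorem follows cleanly. Put $p^d=o_+(x)$. In $x^{p^d}=\sum_{i=1}^{p-1}\binom{p^d}{i}\nu^{\,i-1}(x)$ the term $i=1$ is $p^d x=0$, while for $2\le i\le p-1$ we have $p\nmid i$, so $v_p\binom{p^d}{i}=d$ and $\binom{p^d}{i}\nu^{\,i-1}(x)=0$ because $o_+(\nu^{\,i-1}(x))$ divides $p^d$; hence $x^{p^d}=0$ and $o_\cdot(x)\mid o_+(x)$. Conversely, if $p^e=o_\cdot(x)$, the same truncation gives $x^{p^e}=p^e\big(\id+\sum_{i=2}^{p-1}u_i\nu^{\,i-1}\big)(x)=p^e\,\Phi(x)$, where each $u_i=\binom{p^e}{i}/p^e$ is a unit modulo $p$ and $\Phi=\id+\sum_{i=2}^{p-1}u_i\nu^{\,i-1}$ is $\id$ plus a nilpotent endomorphism, hence an automorphism of $A$; from $x^{p^e}=0$ and injectivity of $\Phi$ we get $p^e x=0$, so $o_+(x)\mid o_\cdot(x)$. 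Thus $o_\cdot(x)=o_+(x)$. The \emph{moreover} clause is then immediate: when $(B,\cdot)$ is abelian, the identity map is a bijection between the finite abelian $p$-groups $(B,\cdot)$ and $(B,+)$ preserving the order of every element, so they have the same number of elements of each order and are therefore isomorphic. Finally, in the \emph{in particular} case $\sum_i\alpha_i=n$ forces $m\le n$, whence $m+2\le n+2\le p$ and the main statement applies.

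The crux—and the step I expect to be the main obstacle—is exactly the bound $\nu^{p-1}(x)=0$, i.e. that the iterated left $*$-powers $x,\ x*x,\ x*(x*x),\dots$ of $x$ vanish within $p-1$ steps. The naive layer count above yields only $\nu^{p-1}(A)\subseteq pA$, which is not enough: an element of $pA$ may still have full additive order, so the potentially surviving terms of $x^{p^d}$ with $p\mid i$ (the first being $i=p$, whose coefficient $\binom{p^d}{p}$ has valuation only $d-1$) are not controlled by the divisibility $o_+(\nu^{\,i-1}(x))\mid p^d$ alone. Upgrading $\nu^{p-1}(A)\subseteq pA$ to the vanishing that is needed requires interlacing the $\nu$-adic filtration with the $p$-adic filtration and running a dimension count on each layer $p^{k}A/p^{k+1}A$—each an $\mathbb F_p$-space of dimension at most $m$, hence strictly less than $p-1$—so that $p-1$ applications of $\nu$ must exhaust the filtration. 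This is the place where $m+2\le p$ is genuinely used, and I would model the argument on that of Featherstonhaugh, Caranti and Childs.
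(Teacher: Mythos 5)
Your formal manipulations (the expansion $x^n=\sum_{i\ge 1}\binom{n}{i}\nu^{\,i-1}(x)$ with $\nu=\lambda_x-\id$, the valuation count $v_p\binom{p^d}{i}=d$ for $p\nmid i$, the ``moreover'' and ``in particular'' reductions) are all fine, but the single claim you reduce everything to --- $\nu^{p-1}(x)=0$ --- is false under the stated hypotheses, so the gap you flag cannot be filled. Take $B$ to be the two-sided brace of the radical ring $J=p\Z/p^{\alpha+1}\Z$ (so $a\cdot b=a+b+ab$), which has $(B,+)\cong\Z/(p^{\alpha})$, i.e.\ $m=1$, and $m+2\le p$ holds for every odd $p$. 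Here $\lambda_x(y)=y+xy$, so $\nu^{k}(x)=x^{k+1}$ (ring power), and for the generator $x=\overline{p}$ one gets $\nu^{p-1}(x)=\overline{p^{\,p}}\neq 0$ as soon as $\alpha\ge p$. (The theorem itself holds in this brace: $o_\cdot(x)=o_+(x)=p^{\alpha}$, since $1+p$ has order $p^{\alpha}$ modulo $p^{\alpha+1}$; it is only your lemma that fails.) Your proposed upgrade via a dimension count on the layers $p^kA/p^{k+1}A$ cannot work: each application of $\nu^{m}$ descends only one step of the $p$-adic filtration ($\nu^m=pN$ by the paper's Lemma \ref{lemma2}), there are $\alpha_m$ steps, and the nilpotency index of $\nu$ is genuinely of size up to $m\alpha_m$, unbounded in terms of $p$. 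Consequently both of your divisibility arguments break: in $x^{p^d}=\sum_{i\ge1}\binom{p^d}{i}\nu^{\,i-1}(x)$ the terms with $p\mid i$ (the first being $i=p$, of valuation $d-1$) survive, exactly as you feared.

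The paper's proof never establishes, and does not need, any global vanishing of $\nu$. It argues by induction on the layers $\Omega_{i+1}(B,+)\setminus\Omega_i(B,+)$, where $\Omega_i(B,+)=\{x: p^i x=0\}$, showing that $a\mapsto a^p$ carries the $(i+1)$-st additive layer exactly into the $i$-th, and then transfers multiplicative orders by the inductive hypothesis, so that both divisibilities $o_\cdot(x)\mid o_+(x)$ and $o_+(x)\mid o_\cdot(x)$ come out simultaneously; the base case $i=0$ uses the embedding of the sub-brace $\Omega_1(B,+)$ into $U_{m+1}(\mathbb{F}_p)$ (Lemma \ref{lemmaF}), which is where $m+1\le p$ first enters. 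The delicate point, $a^p\notin\Omega_{i-1}(B,+)$, is handled by a \emph{relative} version of your idea: inside $T=\Omega_{i+1}(B,+)/\Omega_{i-1}(B,+)$ one filters the subgroup $S$ generated by the classes of the iterates $\nu^{j}(a)$ by the subgroups $S^k$ (those with $j\ge k-1$); if $\overline{pa}\in S^k\setminus S^{k+1}$, then $S/S^k$ is elementary abelian with basis $\overline{a},\overline{\nu(a)},\dots,\overline{\nu^{k-2}(a)}$, so $k-1\le m\le p-2$, whence $k+1\le p$ and $\overline{\nu^{p-1}(a)}\in S^{p}\subseteq S^{k+1}$ --- a statement about $\nu^{p-1}(a)$ modulo $S^{k+1}$ in a section of $A$, not about its vanishing. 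The correct role of $m+2\le p$ is thus to bound the length of this element-wise filtration inside a section of rank at most $m$; if you want to salvage your write-up, replace the false absolute bound by this relative one and restructure the two divisibilities as the paper's single layer induction.
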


To prove this theorem, first we need two lemmas.

\begin{lemma}\label{lemmaF}
Let $p$ be a prime number, and let $B$ be a finite left brace with 
$(B,+)\cong (\Z/(p))^m$. Assume $m+1\leq p$. Then, $o_{\cdot}(x)=o_{+}(x)=p$ for any $x\in B$.
\end{lemma}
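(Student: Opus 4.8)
The plan is to use the monomorphism $\gamma$ of Proposition \ref{monomorph} to realize $(B,\cdot)$ inside $\hol(B,+)$ and to compute multiplicative orders as orders of affine transformations. First observe that since $(B,+)\cong(\Z/(p))^m$, every nonzero element has additive order exactly $p$, so $o_{+}(x)=p$ for all $x\neq 0$ is immediate and the real content is the equality $o_{\cdot}(x)=p$. Because $\ord{B}=p^m$, the group $(B,\cdot)$ is a finite $p$-group, so $o_{\cdot}(x)$ is a power of $p$ for every $x$; it therefore suffices to show that $(B,\cdot)$ has exponent $p$, i.e. that $x^p$ equals the identity element $0$ of $(B,\cdot)$ for all $x$.

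Identify $\aut(B,+)\cong GL_m(\mathbb{F}_p)$, so that $\hol(B,+)$ is the affine group, where $(v,M)$ acts by $w\mapsto v+M(w)$ and $(v_1,M_1)(v_2,M_2)=(v_1+M_1(v_2),M_1M_2)$. By Proposition \ref{monomorph} the injective map $\gamma$ sends $g$ to $(g,\lambda_g)$ and hence preserves orders, so I must compute the order of the affine transformation $(g,\lambda_g)$. A straightforward induction gives
$$
(g,\lambda_g)^n=\Big(\big(\id+\lambda_g+\cdots+\lambda_g^{n-1}\big)(g),\ \lambda_g^n\Big),
$$
so $g^p=0$ is equivalent to the two conditions $\lambda_g^p=\id$ and $\big(\id+\lambda_g+\cdots+\lambda_g^{p-1}\big)(g)=0$.

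The key step is to exploit that $\lambda_g$ is unipotent. Since $g$ has $p$-power order and $\lambda$ is a homomorphism, $\lambda_g\in GL_m(\mathbb{F}_p)$ has $p$-power order; over $\mathbb{F}_p$ this forces its minimal polynomial to divide $x^{p^k}-1=(x-1)^{p^k}$, so $N:=\lambda_g-\id$ is nilpotent. As $N$ is an $m\times m$ matrix, $N^m=0$, and the hypothesis $m+1\leq p$ yields $N^{p-1}=0$ (hence also $N^p=0$). The binomial theorem in characteristic $p$ then gives $\lambda_g^p=(\id+N)^p=\id+N^p=\id$, which settles the first condition. For the second, I use the identity in $\mathbb{F}_p[x]$,
$$
1+x+\cdots+x^{p-1}=\frac{x^p-1}{x-1}=\frac{(x-1)^p}{x-1}=(x-1)^{p-1},
$$
evaluated at $x=\lambda_g$, which gives $\id+\lambda_g+\cdots+\lambda_g^{p-1}=N^{p-1}=0$. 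Both conditions hold, so $(g,\lambda_g)^p$ is the identity and $o_{\cdot}(g)\mid p$; for $g\neq 0$ this forces $o_{\cdot}(g)=p$.

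The only delicate point, and precisely where the hypothesis $m+1\leq p$ is needed, is the nilpotency bound: one must guarantee that the nilpotency index of $N$ is at most $p-1$ so that $N^{p-1}$ (and $N^p$) vanish, which holds because $N$ acts on an $m$-dimensional space and $p-1\geq m$. Everything else is the routine affine-group computation above; the two things I would be careful to record explicitly are that $\lambda_g$ is a $p$-element (hence unipotent) and the passage from $N^m=0$ to $N^{p-1}=0$.
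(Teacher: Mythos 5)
Your proof is correct, and it takes a recognizably different route from the paper's, although both begin with the embedding $\gamma:(B,\cdot)\hookrightarrow\hol(B,+)$ of Proposition \ref{monomorph}. The paper argues globally: it conjugates $\gamma(B)$ into the Sylow $p$-subgroup $(\Z/(p))^m\rtimes U_m(\mathbb{F}_p)$ of the holomorph, identifies that subgroup with $U_{m+1}(\mathbb{F}_p)$, and concludes with the single computation $(\Id+N)^p=\Id+N^p=\Id$ for strictly upper triangular $N$ of size $m+1$. You argue element-wise, with no Sylow theory and no conjugation: unipotence of $\lambda_g$ is extracted from its being a $p$-element of $GL_m(\mathbb{F}_p)$ (minimal polynomial dividing $(x-1)^{p^k}$), and the condition $\gamma(g)^p=(0,\id)$ is split into $\lambda_g^p=\id$ and $(\id+\lambda_g+\cdots+\lambda_g^{p-1})(g)=0$, the latter handled by the identity $1+x+\cdots+x^{p-1}=(x-1)^{p-1}$ in $\mathbb{F}_p[x]$. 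The hypothesis enters with exactly matching strength on both sides: you need $N^{p-1}=0$ for an $m\times m$ nilpotent $N$ (i.e.\ $p-1\geq m$), while the paper needs $N^p=0$ at size $m+1$ (i.e.\ $p\geq m+1$) --- the same condition, so nothing is lost. What the paper's packaging buys is the clean standalone fact that every element of $U_{m+1}(\mathbb{F}_p)$ has order dividing $p$ when $p>m$; what yours buys is uniformity with the rest of the paper, since the expansion $g^p=(\id+\lambda_g+\cdots+\lambda_g^{p-1})(g)$ and powers of $\lambda_g-\id$ are precisely the engine of the paper's proof of Theorem \ref{feather}, so your argument makes the lemma a direct special case of that technique rather than an appeal to simultaneous triangularization. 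One tiny point worth recording explicitly: the identity element of $(B,\cdot)$ is $0$, so $\gamma(g)^p=(0,\id)$ yields $g^p=0$ by injectivity of $\gamma$, which is what your final step implicitly uses.
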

\begin{proof}
We know that we can define a monomorphism $\gamma: (B,\cdot)\hookrightarrow \hol(B,+)= (\Z/(p))^m\rtimes \aut((\Z/(p))^m)\cong (\Z/(p))^m\rtimes 
GL_m(\mathbb{F}_p)$. Observe that 
$$
U_m(\mathbb{F}_p)=\{\Id+N\in GL_m(\mathbb{F}_p) : N\text{ is a strictly upper triangular matrix}\}
$$
is a Sylow $p$-subgroup of $GL_m(\mathbb{F}_p)$, so
$(\Z/(p))^m\rtimes U_m(\mathbb{F}_p)$ is a Sylow $p$-subgroup of $(\Z/(p))^m\rtimes GL_m(\mathbb{F}_p)$.
 Since $(B,\cdot)$ is a $p$-group, after a suitable conjugation, we can think that $\gamma(B)$ is a subgroup 
 of $(\Z/(p))^m\rtimes U_m(\mathbb{F}_p)$. But note also that $(\Z/(p))^m\rtimes U_m(\mathbb{F}_p)\cong U_{m+1}(\mathbb{F}_p)$ 
 by the isomorphism
 $$
 (v,M)\longmapsto \begin{pmatrix} M& v\\ 0& 1\end{pmatrix}.
 $$
In conclusion, we can assume without lost of generality that $(B,\cdot)\leq U_{m+1}(\mathbb{F}_p)$.

Now we will see that, when $p>m$, any element of $U_{m+1}(\mathbb{F}_p)$ has multiplicative order $p$.
Take $\Id+N\in U_{m+1}(\mathbb{F}_p)$, with $N$ strictly upper triangular. Then,
$$
(\Id+N)^p=\Id+N^p=\Id+N^{m+1}\cdot N^{p-m-1}=\Id,
$$
using in the first equality that we are in characteristic $p$, and using in the third and in the last one 
that $p>m$, and that $N^{m+1}=0$ because it is a strictly upper triangular matrix of size $m+1$. 

In particular, any element of $(B,\cdot)\leq U_{m+1}(\mathbb{F}_p)$ has order $p$.
\end{proof}

\begin{lemma}\label{lemma2}
Let $A=\Z/(p^{\alpha_1})\times\cdots\times\Z/(p^{\alpha_m})$, for certain $\alpha_i\in\Z$ 
such that $1\leq\alpha_1\leq \alpha_2\leq\dots\leq\alpha_m$. Then, 
for any $M\in \aut(A)$ with order equal to a power of $p$, $(M-\Id)^m=pN$ for some endomorphism $N$ of $A$.  
\end{lemma}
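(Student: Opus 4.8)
The plan is to reduce the statement to a combinatorial divisibility estimate for matrices over the $p$-adic integers. First I would reduce modulo $p$: writing $\bar{A}=A/pA\cong(\Z/(p))^m$ and letting $\bar M\in\aut(\bar A)=GL_m(\mathbb{F}_p)$ be the induced automorphism, the hypothesis that $M$ has $p$-power order forces $\bar M$ into a Sylow $p$-subgroup, hence $\bar M$ is unipotent and $(\bar M-\Id)^m=0$; equivalently $(M-\Id)^m(A)\subseteq pA$. This is the easy half, but it is \emph{not} sufficient: when the $\alpha_i$ are not all equal, an endomorphism $\psi$ with $\psi(A)\subseteq pA$ need not have the form $pN$ with $N\in\End(A)$ (on $\Z/(p)\times\Z/(p^2)$ the map $e_1\mapsto pe_2,\ e_2\mapsto 0$ is such a $\psi$). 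So the real content is to gain one extra factor of $p$ beyond this naive bound.

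To capture that extra factor I would pass to a $p$-adic lift. Set $\hat A=\Z_p^m$ with basis $e_1,\dots,e_m$ and $L=\bigoplus_i p^{\alpha_i}\Z_p e_i$, so that $A=\hat A/L$ and $\End(A)$ is the reduction of those endomorphisms of $\hat A$ that preserve $L$. Any lift of $M$ to a matrix $\hat M$ over $\Z_p$ with $\hat M(L)\subseteq L$ has unit determinant (its reduction is invertible), hence by an index count satisfies $\hat M(L)=L$; thus $M$ lifts to $\hat M\in GL_m(\Z_p)$ stabilizing $L$. Grouping the indices according to the distinct values $\beta_1<\dots<\beta_r$ among the $\alpha_i$, with multiplicities $n_1,\dots,n_r$, the condition $\hat M(L)=L$ says exactly that the $(s,t)$ block of $\hat M$ is divisible by $p^{\max(0,\beta_s-\beta_t)}$. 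Consequently $\hat D:=\hat M-\Id$ has the same off-diagonal block divisibility, is block upper triangular modulo $p$, and each diagonal block $\hat D_{ss}$ is nilpotent mod $p$ (its reduction is $\bar M_{ss}-\Id$, and $\bar M_{ss}$ is unipotent since the eigenvalues of the block-triangular $\bar M$ are all $1$).

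The heart of the argument is then the estimate that every $(s,t)$ block of $\hat D^{\,m}$ is divisible by $p^{\,1+\max(0,\beta_s-\beta_t)}$; granting it, $\tfrac1p\hat D^{\,m}$ still preserves $L$, so it descends to some $N\in\End(A)$ and $(M-\Id)^m=pN$, as required. To prove the estimate I would expand the $(s,t)$ block of $\hat D^{\,m}$ as a sum over paths $s=j_0,j_1,\dots,j_m=t$, each contributing valuation at least $\sum_l\max(0,\beta_{j_l}-\beta_{j_{l+1}})$, which telescopes to at least $\max(0,\beta_s-\beta_t)$. For any path that is not non-increasing in level this telescoping is strict, already yielding the extra $p$. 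For a non-increasing path the level can strictly drop at most $r-1$ times, so among its $m$ steps at least $m-(r-1)=\sum_s(n_s-1)+1$ are "stays" inside a single diagonal block; since all stays at a given level are consecutive, a pigeonhole count forces some level $s$ to be occupied for at least $n_s$ consecutive steps, whence the factor $\hat D_{ss}^{\,n_s}\equiv 0\pmod p$ supplies the missing $p$. The main obstacle is precisely this last step: the plain reduction mod $p$ loses the needed factor, and one must splice together the off-diagonal $p$-divisibility forced by the non-uniform lattice $L$ with the mod-$p$ nilpotency of the diagonal blocks, in one uniform counting argument, to recover it.
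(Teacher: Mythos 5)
Your route is genuinely different from the paper's, and in fact more careful. The paper's entire proof is the first half of your first paragraph: it reduces modulo $p$, notes that $\overline{M}$ has $p$-power order and is therefore unipotent, deduces $(\overline{M}-\Id)^m=0$ in $\End(A/pA)$, and then simply asserts that ``going back to $A$'' gives $(M-\Id)^m=pN$. You correctly identify that this descent is the real issue: the kernel of $\End(A)\to\End(A/pA)$ is strictly larger than $p\End(A)$ as soon as the $\alpha_i$ are not all equal, and your witness on $\Z/(p)\times\Z/(p^2)$ (the map $e_1\mapsto pe_2$, $e_2\mapsto 0$) is correct --- any $N$ with $pN=\psi$ would force $N(e_1)$ to have order $p^2$, impossible since $e_1$ has order $p$. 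So the paper's one-line descent is immediate only in the homocyclic case, and your blockwise $p$-adic accounting is doing real work the paper omits. Your framework is also sound in its parts: $M$ lifts to $\hat M\in GL_m(\Z_p)$ with $\hat M(L)=L$; stabilization of $L$ is exactly the block divisibility $p^{\max(0,\beta_s-\beta_t)}$; membership in $p\End(A)$ is exactly blockwise divisibility of a lift by $p^{1+\max(0,\beta_s-\beta_t)}$; and the diagonal blocks of $\hat D=\hat M-\Id$ are nilpotent modulo $p$.

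However, one step fails as stated: the dichotomy ``any path that is not non-increasing gains the extra $p$ by strict telescoping.'' Take $\beta_s<\beta_t$ and the level path that stays at $s$ for a while, makes a single up-step $s\to t$, and stays at $t$ thereafter. It is not non-increasing, yet every factor's guaranteed valuation $\max(0,\beta_{j_l}-\beta_{j_{l+1}})$ vanishes, so the telescoped bound is $0=\max(0,\beta_s-\beta_t)$: nothing is strict and no extra $p$ appears. These paths are exactly the ones contributing to the blocks above the diagonal, which still require divisibility by $p$ (this is what makes all entries of $\frac{1}{p}\hat D^m$ integral). The repair lies entirely within your own toolkit: the pigeonhole count applies verbatim to \emph{every} monotone path, non-decreasing as well as non-increasing --- at most $r-1$ of the $m$ steps change level, so at least $m-(r-1)=\sum_s(n_s-1)+1$ steps are stays, monotonicity makes all stays at a given level consecutive, and hence some level $s$ carries at least $n_s$ consecutive stays, contributing the matrix factor $\hat D_{ss}^{\,c}$ with $c\geq n_s$, which is divisible by $p$. (This also shows why one must expand over block-level paths, as your notation in effect does: mod-$p$ nilpotency kills the matrix power $\hat D_{ss}^{\,n_s}$, not each individual scalar entry product.) Strict telescoping should then be reserved for the non-monotone paths, where it does work: such a path has a down-step, giving $P:=\sum_l\max(0,\beta_{j_l}-\beta_{j_{l+1}})\geq 1$, and an up-step, giving $P\geq(\beta_s-\beta_t)+1$, whence $P\geq 1+\max(0,\beta_s-\beta_t)$. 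With the case split corrected from ``non-increasing vs.\ not non-increasing'' to ``monotone vs.\ non-monotone,'' your argument is complete and proves the lemma in full.
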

\begin{proof}
Any $M\in\aut(A)$ determines an automorphism $\overline{M}$ of $A/pA\cong (\Z/(p))^m$. 
We can think the elements of $\aut((\Z/(p))^m)$ as matrices in $GL_m(\Z/(p))$. 
Since $\overline{M}$ has a prime power order, it is conjugate to an upper triangular matrix with $1$'s in the diagonal, 
and then $(\overline{M}-\overline{\Id})^m=\overline{0}$, because the matrix is of size $m$. 
Going back to $A$, this implies $(M-\Id)^m=pN$ for some endomorphism $N$ of $A$. 
\end{proof}

\begin{proof}{ \bf(of Theorem \ref{feather})}
Denote $\Omega_i(B,+)=\{x\in B : p^ix=0\}$. Also, denote $\Omega_i(B,\cdot)=\{x\in B : x^{p^i}=0\}$. It is enough to 
show that $\Omega_{i+1}(B,+)\setminus \Omega_i(B,+)\subseteq \Omega_{i+1}(B,\cdot)\setminus \Omega_i(B,\cdot)$ for any 
$i\geq 0$, because 
$B$ is the disjoint union of $\{0\}$ and the sets in the left-hand side (resp. right-hand side), and then equality follows. 

We are going to prove these inclusions by induction over $i$. For $i=0$, observe that $T=\Omega_1(B,+)$ is a sub-brace of $B$ with
additive group isomorphic to an elementary abelian group of $p$-rank equal to $m$, because the $p$-rank of $(B,+)$ is $m$. Then, by 
Lemma \ref{lemmaF}, we know that the exponent of $(T,\cdot)$ is equal to $p$, so $T=\Omega_1(B,+)\subseteq \Omega_1(B,\cdot)$.

Now assume $i\geq 1$, and assume it is true that
$$
\Omega_{i}(B,+)\setminus \Omega_{i-1}(B,+)\subseteq \Omega_{i}(B,\cdot)\setminus \Omega_{i-1}(B,\cdot).
$$
We shall prove that $\Omega_{i+1}(B,+)\setminus \Omega_i(B,+)\subseteq \Omega_{i+1}(B,\cdot)\setminus \Omega_i(B,\cdot)$.
Let $a\in \Omega_{i+1}(B,+)\setminus \Omega_i(B,+)$. We show first that $a^p\in\Omega_i(B,+)$. Note that
\begin{eqnarray*}
a^p&=&(\Id+\lambda_a+\lambda_a^2+\cdots+\lambda_a^{p-1})(a)\\
&=& pa+\sum_{i=2}^{p-1} \binom{p}{i} (\lambda_a-\Id)^{i-1}(a)+(\lambda_a-\Id)^{p-1}(a)
\end{eqnarray*}
Since $pa\in\Omega_{i}(B,+)$, $pa+\sum_{i=2}^{p-1} \binom{p}{i} (\lambda_a-\Id)^{i-1}(a)\in \Omega_i(B,+)$, and 
moreover $(\lambda_a-\Id)^{p-1}(a)\in\Omega_i(B,+)$ because, by Lemma \ref{lemma2}, $p-1\geq m+1\geq m$ implies that 
$(\lambda_a-\Id)^{p-1}=pN$ for some endomorphism $N$ of $(B,+)$. Hence $p^i(\lambda_a-\Id)^{p-1}(a)=p^ipN(a)=N(p^{i+1}a)=0$. 
Therefore $a^p\in\Omega_i(B,+)$.

Now we prove that $a^p\not\in \Omega_{i-1}(B,+)$. 
Define the abelian group $T=\Omega_{i+1}(B,+)/\Omega_{i-1}(B,+)$.
Denote by 
$$S:=\left\{\left.\sum_{i=1}^r \overline{(\lambda_a-\Id)^{k_i}(a)}\in T ~\right\vert~r\geq 0, ~k_i\geq 0\right\}.$$
Note that $S$ is a subgroups of $T$.  
More generally, denote by 
$$S^k:=\left\{\left.\sum_{i=1}^r \overline{(\lambda_a-\Id)^{k_i}(a)}\in T ~\right\vert~r\geq 0, ~k_i\geq k-1\right\},$$  
and note that $S^{k+1}$ is a subgroup of $S^k$ for any $k\geq 1$, and that $(\lambda_a-\Id)(s)\in S^{k+1}$ for any $s\in S^{k}$. 
Besides, $S^\alpha=0$ for a sufficiently big $\alpha$, since 
$(\lambda_a-\Id)^m=pN$ by Lemma \ref{lemma2} implies that $(\lambda_a-\Id)^{m\alpha_m}=p^{\alpha_m}N^{\alpha_m}=0$. We know that 
$pa\not\in \Omega_{i-1}(B,+)$, which means
$\overline{pa}\neq \overline{0}$ in $T$, so 
$\overline{pa}\in S$, but $\overline{pa}\not\in S^\beta$ for some power of $S$. Assume that 
$\overline{pa}\in S^k$ but $\overline{pa}\not\in S^{k+1}$. Then, 
$S/S^k$ is an elementary abelian group because $S^{k}$ contains $\overline{pa}$, and hence 
$p\overline{(\lambda_a-\Id)^{l}(a)}=\overline{(\lambda_a-\Id)^{l}(pa)}\in S^{k}$ for any $l\geq 0$.
 
The elementary abelian group $S/S^k$ has dimension (as $\mathbb{F}_p$-vector space) equal to $k-1$, since it has a basis
consisting of the classes of the elements $a,~(\lambda_a-\Id)(a),\dots,(\lambda_a-\Id)^{k-2}(a)$ in $S/S^k$: they generate
$S/S^k$, and if $\gamma_0 a+\gamma_1 (\lambda_a-\Id)(a)+\cdots +\gamma_{k-2}(\lambda_a-\Id)^{k-2}(a)=0$ in 
$S/S^k$ for some $\gamma_0,\dots,\gamma_{k-2}\in\{0,1,\dots,p-1\}$, then 
$\gamma_0 a+\gamma_1 (\lambda_a-\Id)(a)+\cdots +\gamma_{k-2}(\lambda_a-\Id)^{k-2}(a)=(\lambda_a-\Id)^{k-1}(s)$ in $S$ for 
some $s\in S$. Applying $(\lambda_a-\Id)$ to this equality enough times, we get $\gamma_0(\lambda_a-\id)^{\beta}(a)=0$, with 
$(\lambda_a-\id)^{\beta}(a)\neq 0$; thus $\gamma_0=0$. Repeating this process, we get $\gamma_0=\dots=\gamma_{k-2}=0$, so 
$a,~(\lambda_a-\Id)(a),\dots,(\lambda_a-\Id)^{k-2}(a)$ are linearly independent. 
Now recall that $S/S^k$ is a section of $(B,+)$, so its $p$-rank is less than or equal to $m$, 
the $p$-rank of $(B,+)$. This yields the 
inequalities $k-1\leq m\leq p-2$, which implies $k+1\leq p$. 

Now recall that 
$$
a^p = pa+\sum_{i=2}^{p-1} \binom{p}{i} (\lambda_a-\Id)^{i-1}(a)+(\lambda_a-\Id)^{p-1}(a).
$$
Since $\overline{pa}\in S^k$, we have that $\sum_{i=2}^{p-1} \binom{p}{i} \overline{(\lambda_a-\Id)^{i-1}(a)}\in S^{k+1}$. Moreover 
$$\overline{(\lambda_a-\Id)^{p-1}(a)}\in S^p\subseteq S^{k+1},$$ because $k+1\leq p$. Since $\overline{pa}\not\in S^{k+1}$, 
we have that $\overline{a^p}\not\in S^{k+1}$, which implies that $a^p\not\in \Omega_{i-1}(B,+)$.

For the moment, we know that $a^p\in\Omega_{i}(B,+)\setminus \Omega_{i-1}(B,+)$. By induction hypothesis, 
$a^p\in\Omega_{i}(B,+)\setminus \Omega_{i-1}(B,+)\subseteq \Omega_{i}(B,\cdot)\setminus \Omega_{i-1}(B,\cdot)$, 
and this implies finally that $a\in \Omega_{i+1}(B,\cdot)\setminus \Omega_{i}(B,\cdot)$. 
\end{proof}

Theorem \ref{feather} is inspired in \cite[Proposition 5]{FCC}. In \cite{FCC}, the proof is in terms of radical rings, 
but remember that two-sided
braces and radical rings are equivalent by \cite[Proposition 1]{CJO}. Observe that they prove this theorem for braces with 
abelian multiplicative group, and we have generalized their proof to general left braces. 
Another inspiration for this result is \cite[Theorem 3.2]{Feather}.

\begin{remark}\rm{
We explain now the relation between brace theory and the theory of Hopf-Galois extensions, that 
has been useful to apply results like the ones in \cite{Feather} and \cite{FCC} in our paper. Let $L/K$ be a finite 
field extension. We say that $L/K$ is a Hopf-Galois extension if there exists a Hopf algebra $H$ over $K$
of finite dimension, and $\mu:H\to \operatorname{End}_K(L)$ a Hopf action such that 
$
(1,\mu): L\otimes_K H\to \operatorname{End}_K(L)
$
is an isomorphism of $K$-vector spaces, where $(1,\mu)(l\otimes h)(t)=l\cdot (\mu(h)(t))$. 
For example, when $L/K$ is a Galois extension with Galois group $G$, the Hopf algebra
$H=K[G]$ satisfies these properties. 

It is proved in \cite{By} that, when $L/K$ is a finite Galois extension and $G=Gal(L/K)$, 
if $G'$ is a group such that there exists an injective morphism of groups $\gamma: G\hookrightarrow\hol(G')=G'\rtimes\aut(G')$
satisfying that $\gamma(G)$ is a regular subgroup of $\hol(G')$, then $H=K[G']$ defines a Hopf-Galois extension of 
$L/K$. Moreover, it is proved that any Hopf-Galois extension is of this form. Hence this translates the problem 
of finding Hopf-Galois extensions completely in group-theoretical terms: given a Galois group $G$, first find 
all the regular subgroups of $\hol(G')$ isomorphic to $G$, and second, find all the injective morphisms from $G$ 
to $\hol(G')$ with one of these subgroups as image. Observe that the regularity property implies that $\ord{G}=\ord{G'}$.

Note that, by Proposition \ref{transl}, finding regular subgroups of $\hol(G')$ when $G'$ is abelian is equivalent to 
find left braces with additive group isomorphic to $G'$. 
So the first part of the problem of construction of abelian Hopf-Galois extensions (i.e. with $H=K[G']$ commutative) 
of a Galois extension $L/K$ with Galois group equal to $G$ is equivalent to 
our problem of construction of left braces with multiplicative group isomorphic to $G$. 

We hope that this connection between this two theories would be fruitful in the future.  
For instance, in terms of Hopf-Galois extensions, \cite[Proposition 5]{FCC} is proved for $G$ and $G'$ abelian 
and, using brace theory, we have managed to generalize it for $G'$ abelian and $G$ non-abelian in Theorem \ref{feather}. }
\end{remark}

\section{Lazard's correspondence}

Assume that we have proved the following result.

\begin{theorem}\label{countergroup}
For some primer $p\geq 12$, there exists a group $(G,\cdot)$ of order $p^{10}$ and exponent $p$ without any monomorphism
$(G,\cdot)\hookrightarrow GL_{11}(\mathbb{F}_{p})$.
\end{theorem}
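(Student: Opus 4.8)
The plan is to realise $G$ through the Lazard correspondence, transporting a non-representability statement from a nilpotent Lie algebra to the group. Fix a prime $p\geq 12$ (to be constrained further below) and recall from Section 3 that, for nilpotent $\mathbb{F}_p$-Lie algebras of class ${}<p$, the Baker--Campbell--Hausdorff formula yields mutually inverse maps $\exp,\log$ identifying such a Lie algebra $\mathfrak{g}$ with a finite $p$-group. Since the $p$-th power of a single element $x$ equals $px$ under this product, every group $\exp(\mathfrak{g})$ has exponent $p$ and order $p^{\dim\mathfrak{g}}$; so starting from a $10$-dimensional $\mathfrak{g}$ I obtain a group of order $p^{10}$ and exponent $p$. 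The same correspondence applies to $\mathfrak{n}_{11}(\mathbb{F}_p)$, the algebra of strictly upper triangular $11\times 11$ matrices (nilpotent of class $10<p$), whose Lazard group is $U_{11}(\mathbb{F}_p)$, a Sylow $p$-subgroup of $GL_{11}(\mathbb{F}_p)$; here $\log$ is the truncated logarithm, well defined because $2!,\dots,10!$ are units modulo $p$.

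The first step is to reduce the group statement to a Lie-theoretic one. If $G=\exp(\mathfrak{g})$ admitted a monomorphism into $GL_{11}(\mathbb{F}_p)$, then, $G$ being a $p$-group, its image would lie in a Sylow $p$-subgroup, i.e.\ in a conjugate of $U_{11}(\mathbb{F}_p)$; applying $\log$ and using that Lazard's correspondence matches subgroups with subalgebras and isomorphisms with isomorphisms, this would produce a Lie algebra embedding $\mathfrak{g}\hookrightarrow\mathfrak{n}_{11}(\mathbb{F}_p)$. Conversely, any such embedding exponentiates back to a monomorphism of groups. Thus $G$ has no monomorphism into $GL_{11}(\mathbb{F}_p)$ if and only if $\mathfrak{g}$ has no faithful $11$-dimensional representation by nilpotent matrices (equivalently, by Engel's theorem, no embedding into $\mathfrak{n}_{11}(\mathbb{F}_p)$). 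It therefore suffices to construct a $10$-dimensional nilpotent $\mathbb{F}_p$-Lie algebra with no faithful $11$-dimensional nilpotent representation.

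For this I would take the specific $10$-dimensional nilpotent Lie algebra over $\mathbb{C}$ singled out in \cite{Burde}, whose structure constants are rational (and which I may scale to be integral), and whose defining feature---established there by an explicit computation---is the absence of a faithful representation of dimension $11$ by nilpotent operators (in that language, the absence of an affine structure). Let $\mathfrak{g}_p$ be its reduction modulo $p$, again a $10$-dimensional nilpotent Lie algebra of class $\leq 9<p$. The crux, and the main obstacle, is to show that Burde's non-existence survives reduction for suitable $p$. I would encode ``there is a faithful nilpotent $11$-dimensional representation'' as the solvability of an explicit polynomial system: a tuple $(X_1,\dots,X_{10})$ of strictly upper triangular matrices satisfying the quadratic bracket relations $[X_i,X_j]=\sum_k c_{ij}^k X_k$, together with the non-vanishing (encoded by the Rabinowitsch trick) of some maximal minor expressing linear independence of the $X_i$. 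Burde's result says this system has no solution over $\mathbb{C}$, hence none over $\overline{\mathbb{Q}}$; by the Nullstellensatz the constant $1$ lies in the corresponding ideal over $\mathbb{Q}$, so after clearing denominators one obtains an identity $D=\sum_j h_j f_j$ in which the generators $f_j$ (the bracket relations and the Rabinowitsch polynomial) and the $h_j$ have integer coefficients and $D\in\mathbb{Z}_{>0}$. For every prime $p$ not dividing $D$ (and not dividing any denominator of the structure constants) the reduction of this identity shows $1$ lies in the ideal over $\overline{\mathbb{F}_p}$, so the system is unsolvable over $\overline{\mathbb{F}_p}$ and a fortiori over $\mathbb{F}_p$: the algebra $\mathfrak{g}_p$ has no faithful $11$-dimensional nilpotent representation.

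Only finitely many primes divide $D$, so I may choose a prime $p\geq 12$ avoiding them; this also explains why small primes such as $p=11$ need not work (cf.\ the failure noted in the introduction). For such a $p$, set $G:=\exp(\mathfrak{g}_p)$. By the first paragraph $G$ has order $p^{10}$ and exponent $p$, and by the reduction carried out in the second and third paragraphs it admits no monomorphism into $GL_{11}(\mathbb{F}_p)$, which is the assertion. The delicate point throughout is the transfer in the third paragraph: the argument is unconditional only modulo Burde's characteristic-zero computation, and it produces the prime non-constructively, as the complement of the (finite, in principle computable) set of divisors of the Nullstellensatz denominator $D$.
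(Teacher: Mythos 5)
Your proposal is correct and takes essentially the same route as the paper: Lazard's correspondence together with conjugation into the Sylow $p$-subgroup $U_{11}(\mathbb{F}_p)$ reduces the theorem to the non-existence of a Lie embedding of Burde's algebra modulo $p$ into $\mathfrak{u}_{11}(\mathbb{F}_p)$, which both you and the paper (Theorem \ref{counterlie}) obtain by clearing denominators in a Nullstellensatz certificate for Burde's characteristic-zero computation and choosing a prime $p\geq 12$ avoiding the finitely many divisors of the resulting integer. The only cosmetic difference is your encoding of faithfulness via linear independence of all basis images (maximal minors plus the Rabinowitsch trick), where the paper uses the simpler equivalent condition $\rho(e_9)\neq 0$, valid because the center is $\left\langle e_9\right\rangle$ and any nonzero ideal of a nilpotent Lie algebra meets the center nontrivially.
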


Then, Question \ref{conjecture} is answered in the negative: $G$ is a solvable group, and suppose to arrive to a contradiction 
that we can define over it a structure of left brace. Then, since $p\geq 10+2=12$, using Theorem \ref{feather},
we know that $o_{+}(x)=o_{\cdot}(x)=p$ for any $x\in G\setminus\{e\}$ (because the multiplicative exponent of $G$ is $p$).
Thus $(G,+)\cong (\Z/(p))^{10}$. Using Proposition \ref{monomorph}, we can then define a monomorphism 
$(G,\cdot)\hookrightarrow \hol(G,+)\cong \hol((\Z/(p))^{10})$. But there is also a monomorphism 
$\hol((\Z/(p))^{10})\hookrightarrow GL_{11}(\mathbb{F}_{p})$ given by
$$
(v,A)\mapsto \begin{pmatrix} A& v\\ 0& 1\end{pmatrix}.
$$
Composing the two maps, we find a monomorphism $(G,\cdot)\hookrightarrow GL_{11}(\mathbb{F}_{p})$, in contradiction with 
the property of our group described in Theorem \ref{countergroup}. Thus it is impossible to define a left brace structure 
over $G$.

So it is only left to prove Theorem \ref{countergroup}. For this, we are going to apply 
Lazard's correspondence, and work in the Lie algebra setting, as we now explain.

In this section, we reduce our problem about $p$-groups to an analogous problem in the theory of 
nilpotent Lie rings, by means of the Lazard's correspondence. A good reference for this result is 
\cite[Chapters 9 and 10]{Khukhro}. Here is a summary of part of this result that we need in this paper.

\paragraph{Lazard's correspondence:} Let $p$ be a fixed prime number.
Let $\mathfrak{g}$ be a nilpotent Lie ring with additive group isomorphic to a finite $p$-group and with 
nilpotency class strictly smaller than $p$. Then the Baker-Campbell-Hausdorff formula (see \cite[Chapter 9]{Khukhro}) defines a product 
on $\mathfrak{g}$ which turns the set of elements of $\mathfrak{g}$ into a $p$-group. We denote this 
group by $\exp(\mathfrak{g})$.

Conversely, if $G$ is a finite $p$-group of nilpotency class strictly smaller than $p$, then the inversion of the 
Baker-Campbell-Hausdorff formula defines a Lie bracket on $G$ which turns the set of elements of $G$ into a nilpotent Lie ring
with additive group isomorphic to a finite $p$-group.  We denote this Lie ring by $\log(G)$. 

Moreover, $\exp$ and $\log$ are mutually inverse functors, so they define an equivalence between the category of finite 
$p$-groups with nilpotency class $c<p$, and the category of nilpotent Lie rings with additive group isomorphic 
to a finite $p$-group and with nilpotency class $c<p$.

This correspondence preserves many properties: nilpotency class, derived length, subgroups correspond to sub-Lie rings, etc. 
In particular, we have the following corollary relating the orders of the elements of $\mathfrak{g}$ and of $\exp(\mathfrak{g})$. 

\begin{corollary}\label{LazOrd}
Let $\mathfrak{g}$ and $G=\exp(\mathfrak{g})$ be as above. Then,
 $o_{(\mathfrak{g},+)}(g)=o_{(G,\cdot)}(g)$ for every $g\in G$ (observe that $\mathfrak{g}$ and $G=\exp(\mathfrak{g})$ share the 
same set of elements). 
\end{corollary}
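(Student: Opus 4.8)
The plan is to show that the additive order of an element $g$ in the Lie ring $\mathfrak{g}$ coincides with its multiplicative order in $G=\exp(\mathfrak{g})$, where both operations live on the same underlying set and are related through the Baker--Campbell--Hausdorff (BCH) formula. The key simplifying observation is that, for a single fixed element $g$, the multiplicative powers $g^{\cdot n}$ are computed by the BCH formula applied to $g$ with itself repeatedly, and since $g$ commutes with itself in the Lie bracket (i.e. $\lie{g,g}=0$), all the higher BCH correction terms — which are iterated Lie brackets of $g$ with $g$ — vanish. Concretely, the BCH formula reads $x * y = x + y + \tfrac{1}{2}\lie{x,y} + \cdots$, where every term beyond $x+y$ is a Lie polynomial involving at least one bracket. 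When $y=x$, every such bracketed term is zero.

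\medskip

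\noindent\textbf{Main steps.} First I would record the precise form of the BCH product and note that because $\mathfrak{g}$ has nilpotency class $c<p$, the relevant BCH series is a finite sum and all the rational coefficients appearing have denominators only divisible by primes $\leq c < p$, so they make sense (are invertible) in the $p$-group/ $\Z_{(p)}$-module setting; this is exactly the hypothesis that makes Lazard's correspondence work. Second, I would prove by induction on $n$ that the $n$-th multiplicative power satisfies $g^{\cdot n} = n\cdot g$ (the $n$-fold additive sum). The base case $n=1$ is trivial, and for the inductive step one computes
$$
g^{\cdot(n+1)} = g^{\cdot n} * g = (n\cdot g) * g = n\cdot g + g + (\text{brackets of } n\cdot g \text{ with } g),
$$
and every bracket term vanishes since $\lie{g,g}=0$ forces every iterated bracket built from copies of $g$ to be zero. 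Hence $g^{\cdot(n+1)}=(n+1)\cdot g$. Third, from the identity $g^{\cdot n}=n\cdot g$ the equality of orders is immediate: $g^{\cdot n}$ equals the identity element (which is $0$, the common additive and multiplicative neutral element) if and only if $n\cdot g = 0$, so the smallest such positive $n$ is simultaneously the multiplicative order in $(G,\cdot)$ and the additive order in $(\mathfrak{g},+)$.

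\medskip

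\noindent\textbf{Anticipated obstacle.} The only delicate point is the claim that all higher BCH terms vanish when bracketing an element with itself. This requires knowing that every Lie monomial occurring in the BCH expansion (beyond the linear terms) contains at least one Lie bracket, and that when both arguments are scalar multiples of the single element $g$ these monomials evaluate to iterated self-brackets, all of which are zero by antisymmetry of the Lie bracket together with the Jacobi identity. I would justify this by appealing to the standard fact (available from \cite[Chapters 9 and 10]{Khukhro}) that the BCH formula is a Lie series, i.e. a formal sum of iterated brackets; once one grants that structural description, self-commutativity of $g$ makes every nonlinear term disappear. A secondary, purely bookkeeping concern is confirming that the finiteness of the BCH sum (guaranteed by nilpotency class $<p$) and the well-definedness of its coefficients modulo the $p$-group structure cause no issue, but these are precisely the ingredients already built into the statement of Lazard's correspondence quoted above, so I would simply invoke them rather than reprove them.
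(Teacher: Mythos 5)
Your proof is correct, and it is essentially the argument the paper implicitly relies on: the paper states this corollary without proof, as a standard consequence of Lazard's correspondence, and your computation $g^{\cdot n}=n\cdot g$ via the vanishing of all bracketed BCH terms (since every Lie monomial of degree $\geq 2$ evaluated on multiples of a single element $g$ contains $\lie{g,g}=0$) is precisely the standard verification, including the correct observation that nilpotency class $c<p$ makes the truncated BCH coefficients $p$-integral. The only cosmetic remark is that you do not even need antisymmetry plus Jacobi for the vanishing --- bilinearity and the Lie ring axiom $\lie{x,x}=0$ already suffice.
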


\paragraph{} Recall that $U_{m}(\mathbb{F}_{p})$ denotes the group of upper triangular matrices of size $m$ 
with coefficients over $\mathbb{F}_p$ and with $1$'s in the diagonal . It is easy to see that $U_{m}(\mathbb{F}_{p})$ is a 
Sylow $p$-subgroup of $GL_m(\mathbb{F}_p)$, that its nilpotency class is $m-1$, and that 
$\log(U_{m}(\mathbb{F}_{p}))\cong \mathfrak{u}_{m}(\mathbb{F}_{p})$, the Lie algebra of upper triangular matrices of size 
$m$ with $0$'s in the diagonal 
with coefficients over $\mathbb{F}_p$.
Now consider the following result.

\begin{theorem}\label{counterlie}
For some primer $p\geq 12$, there exists a $\mathbb{F}_{p}$-Lie algebra $L_{p}$ of order $p^{10}$ with no Lie algebra monomorphism
$\rho: L_{p}\hookrightarrow \mathfrak{u}_{11}(\mathbb{F}_{p})$.
\end{theorem}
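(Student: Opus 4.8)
The plan is to take Burde's characteristic-zero example, fix an integral model of it, and transport the non-embedding statement to characteristic $p$ by a Nullstellensatz argument, so that no computation over $\mathbb{F}_p$ is needed. First I would pin down a $\Z$-form of Burde's algebra: his computations over $\mathbb{C}$ produce an explicit $10$-dimensional nilpotent Lie algebra $\mathfrak{g}$ whose structure constants, in a suitable basis $e_1,\dots,e_{10}$, are integers $C_{ij}^{t}\in\Z$. Let $\mathfrak{g}_{\Z}$ be the corresponding Lie ring (free of rank $10$ over $\Z$, with the Jacobi identity holding over $\Z$), and set $L_p:=\mathfrak{g}_{\Z}\otimes_{\Z}\mathbb{F}_p$; this is an $\mathbb{F}_p$-Lie algebra of order $p^{10}$ for every $p$, with $\mathfrak{g}_{\Z}\otimes_{\Z}\mathbb{C}\cong\mathfrak{g}$. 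The input I would take from \cite{Burde} is, after reinterpreting his computation (via Engel's theorem, which is valid over any field) as the statement that $\mathfrak{g}$ has no faithful $11$-dimensional representation by nilpotent matrices, that $\mathfrak{g}$ admits \emph{no} Lie algebra embedding $\mathfrak{g}\hookrightarrow\mathfrak{u}_{11}(\mathbb{C})$.

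Next I would encode the embedding problem as an affine scheme over $\Z$. For a field $k$, a Lie homomorphism $\rho\colon L_p\otimes k\to\mathfrak{u}_{11}(k)$ is determined by the images $X_i:=\rho(e_i)$, each a strictly upper triangular $11\times 11$ matrix with $\binom{11}{2}=55$ entries, hence by $550$ coordinates $x$; the condition $X_iX_j-X_jX_i=\sum_{t}C_{ij}^{t}X_{t}$ is a system of quadratic equations with integer coefficients. Injectivity of $\rho$ is equivalent to linear independence of $X_1,\dots,X_{10}$, i.e. to non-vanishing of one of the maximal minors $m_1,\dots,m_N$ of the $10\times 55$ coordinate matrix; I encode this by a single Rabinowitsch equation $\sum_{\ell}s_{\ell}m_{\ell}=1$ in auxiliary variables $s_\ell$. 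Let $Y=\operatorname{Spec}\Z[x,s]/(g_1,\dots,g_r)$ be the resulting $\Z$-scheme, with $g_i\in\Z[x,s]$. By construction, for every field $k$ the $k$-points $Y(k)$ correspond exactly to the faithful embeddings $L_p\otimes k\hookrightarrow\mathfrak{u}_{11}(k)$; in particular Burde's result says $Y(\mathbb{C})=\emptyset$.

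Finally I would run the transfer. Since $\mathbb{C}$ is algebraically closed and $Y(\mathbb{C})=\emptyset$, the weak Nullstellensatz gives $1\in(g_1,\dots,g_r)\,\mathbb{C}[x,s]$; as the $g_i$ have rational coefficients, solving the corresponding bounded-degree linear system over $\Q$ rather than over $\mathbb{C}$ yields $1=\sum_i h_ig_i$ with $h_i\in\Q[x,s]$. Clearing denominators produces an identity $D=\sum_i\tilde h_i g_i$ with $\tilde h_i\in\Z[x,s]$ and a fixed nonzero integer $D$. For every prime $p\nmid D$, reduction modulo $p$ gives $\overline D=\sum_i\overline{\tilde h_i}\,\overline{g_i}$ with $\overline D\neq 0$ in $\mathbb{F}_p$, so $1\in(\overline{g_1},\dots,\overline{g_r})$ in $\overline{\mathbb{F}_p}[x,s]$ and hence $Y(\overline{\mathbb{F}_p})=\emptyset$. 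Thus for such $p$ the algebra $L_p$ has no embedding into $\mathfrak{u}_{11}(\overline{\mathbb{F}_p})$, and a fortiori none into $\mathfrak{u}_{11}(\mathbb{F}_p)$. Choosing a prime $p\geq 12$ with $p\nmid D$ proves the theorem, and for such large $p$ one checks additionally that $L_p$ stays nilpotent of the same class $<p$, as Lazard's correspondence will later require for Theorem \ref{countergroup}.

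The routine parts are the parametrizing scheme and the Rabinowitsch encoding of injectivity. The genuine content is precisely point (b) of the Introduction: transferring Burde's characteristic-zero non-embedding to characteristic $p$. The Nullstellensatz certificate achieves this for all but finitely many primes, but only \emph{ineffectively}, telling us ``$p$ large enough'' without a usable bound; this is acceptable only because the conclusion merely requires the existence of some prime $p\geq 12$. The subtler points to verify carefully are that Burde's $\mathbb{C}$-example genuinely admits an integral model $\mathfrak{g}_{\Z}$ reducing to a $10$-dimensional algebra (if his structure constants were irrational one would instead reduce the ring of integers of a number field at a prime above $p$), and that the translation of Burde's invariant-theoretic computation into the clean ``no $\mathfrak{u}_{11}$-embedding'' statement is faithful — this translation, rather than the transfer machinery, is where I expect the main difficulty to lie.
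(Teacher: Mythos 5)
Your proposal is correct and follows essentially the same route as the paper: the paper likewise fixes the integral form $L$ of Burde's algebra and sets $L_p=L\otimes_\Z\mathbb{F}_p$, encodes Lie morphisms into $\mathfrak{u}_{11}$ as a polynomial system over $\Z$ with a Rabinowitsch equation for injectivity, quotes Burde's Gr\"obner-basis computation that the system has no solution over $\mathbb{C}$, and transfers via the weak Nullstellensatz by finding a $\Q$-certificate, clearing denominators to an identity $\sum_i f_ig'_i=k\in\Z$, and choosing a prime $p\geq 12$ with $\gcd(k,p)=1$ (accepting, exactly as you note, that this is ineffective). The only cosmetic differences are that the paper parametrizes morphisms by $E_0,E_1$ alone, using $e_{i+1}=[e_0,e_i]$, and encodes injectivity by $\rho(e_9)\neq 0$ via the one-dimensional center of the nilpotent algebra rather than by your maximal-minor condition (your encoding is equally valid, and even sidesteps having to know the center of $L_p$ modulo $p$).
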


This theorem is enough to prove Theorem \ref{countergroup}, by the following argument.

\begin{proof}{ \bf(of Theorem \ref{countergroup})}
The Lie algebra of Theorem \ref{counterlie} has order $p^n=p^{10}$. 
Since $p> 10=n$, and the nilpotency class is always less than $n$, we can apply Lazard's correspondence to 
$L_{p}$. 
Then, this gives us a group $G=\exp(L_{p})$ of order $p^{10}$ with exponent $p$, because $(L_{p},+)\cong (\mathbb{F}_{p})^{10}$ and
$o_{(G,\cdot)}(g)=o_{(L_p,+)}(g)=p$ for all $g\in G$ by Corollary \ref{LazOrd}. 
Moreover, there is no monomorphism $(G,\cdot)\hookrightarrow GL_{11}(\mathbb{F}_{p})$ because, if 
$\Delta: (G,\cdot)\to GL_{11}(\mathbb{F}_{p})$ is an injective morphism, 
after a suitable conjugation, we may assume that $\Delta: (G,\cdot)\hookrightarrow U_{11}(\mathbb{F}_{p})$.
Then 
$\log(\Delta): L_{p}\to \log(U_{11}(\mathbb{F}_{p}))\cong \mathfrak{u}_{11}(\mathbb{F}_{p})$ (we can apply the Lazard's correspondence 
to $U_{11}(\mathbb{F}_p)$ because $p$ is bigger than its nilpotency class, which is $m-1=10$) is also an injective Lie morphism since
$\log(\Delta(x))=0$ implies $\Delta(x)=\exp(\log(\Delta(x)))=\exp(0)=\Id$, and $x=1_G=0_{L_{p}}$ because $\Delta$ is injective. 
\end{proof}

So it only remains to prove Theorem \ref{counterlie}. We will find an example of this type in the next section, 
based on analogous examples in characteristic $0$. This kind of examples was obtained in \cite{Burde} 
to disprove a conjecture of Milnor about left-invariant affine structures over manifolds \cite{Milnor}.
 After showing that this conjecture can be disproved by means of faithful Lie algebra representations, 
 the key idea in \cite{Burde} is to prove that a system of polynomial equations has no solution over $\mathbb{C}$. 
 We prove in the next section that the same system has no solution over $\mathbb{F}_p$ for some $p$.

\section{Definition of the Lie algebra counterexample}

To prove Theorem \ref{counterlie}, we find explicitly a concrete example of a Lie algebra with the desired properties, 
based on analogous examples appearing in \cite{Burde}, where Burde finds 
an infinite family of nilpotent Lie algebras over $\mathbb{C}$ of dimension $10$ with no faithful representation of dimension $11$.
This family is described as follows: if we denote the basis of the Lie algebra as $e_0,e_1,\dots,e_9$, its Lie bracket is defined by
\begin{eqnarray*}
\lie{e_0,e_i} &=& e_{i+1},\text{ for all } i=1,2,\dots,8,\\
\lie{e_1,e_2} &=& \lambda_1 e_4+\lambda_2 e_5+\cdots +\lambda_6 e_9,\\
\lie{e_1,e_3} &=& \lambda_1 e_5+\lambda_2 e_6+\cdots +\lambda_5 e_9,\\
\lie{e_1,e_4} &=& (\lambda_1-\lambda_7)e_6+(\lambda_2-\lambda_8)e_7+(\lambda_3-\lambda_9)e_8+(\lambda_4-\lambda_{10})e_9,\\
\lie{e_1,e_5} &=& (\lambda_1-2\lambda_7)e_7+(\lambda_2-2\lambda_8)e_8+(\lambda_3-2\lambda_9)e_9,\\
\lie{e_1,e_6} &=& (\lambda_1-3\lambda_7+\lambda_{11})e_8+(\lambda_2-3\lambda_8+\lambda_{12})e_9,\\
\lie{e_1,e_7} &=& (\lambda_1-4\lambda_7+3\lambda_{11})e_9,\\
\lie{e_1,e_8} &=& -\lambda_{13} e_9,\\
\lie{e_2,e_3} &=& \lambda_7 e_6+\lambda_8 e_7+\cdots +\lambda_{10} e_9,\\
\lie{e_2,e_4} &=& \lambda_7 e_7+\lambda_8 e_8+\lambda_9 e_9,\\
\lie{e_2,e_5} &=& (\lambda_7-\lambda_{11})e_8+(\lambda_8-\lambda_{12})e_9,\\
\lie{e_2,e_6} &=& (\lambda_7-2\lambda_{11})e_9,\\
\lie{e_2,e_7} &=& \lambda_{13} e_9,\\
\lie{e_3,e_4} &=& \lambda_{11} e_8+\lambda_{12} e_9,\\
\lie{e_3,e_5} &=& \lambda_{11} e_9,\\
\lie{e_3,e_6} &=& -\lambda_{13} e_9,\\
\lie{e_4,e_5} &=& \lambda_{13} e_9,
\end{eqnarray*}
and the others brackets $[e_i,e_j]$, $i>j$, are equal to zero (see \cite[page 607]{Burde}). 
There are also some relations that the $\lambda's$ must satisfy: 
$\lambda_1\neq 0$, $\lambda_7=-\lambda_1$, 
$\lambda_{11}=3\lambda_1$, 
$\lambda_{12}=-(9\lambda_2+16\lambda_8)+\frac{\lambda_{13}}{\lambda_1}(2\lambda_3+\lambda_9)$, and
$3\lambda_2+\lambda_8\neq 0$. The explanation for these relations can be found in \cite[page 607 and Proposition 6]{Burde} 
(there Burde denotes the Lie algebra by Case (A1)). Note that any of these Lie algebras has nilpotency class $9$, and 
that the center is equal to $\left\langle e_9\right\rangle$. 
In \cite[Proposition 6]{Burde}, it is proved that, for any Lie algebra $\mathfrak{g}$ in this family, there is no injective morphism
$\mathfrak{g}\hookrightarrow M_{11}(\mathbb{C})$.

To fix ideas, we choose one concrete example from this family. 
If we choose the values $\lambda_1=\lambda_2=1$, $\lambda_3=\dots=\lambda_6=0$, $\lambda_7=-\lambda_1=-1$, 
$\lambda_8=-2$, $\lambda_9=-25$, $\lambda_{10}=0$, $\lambda_{11}=3\lambda_1=3$, 
$\lambda_{13}=1$, and
$\lambda_{12}=-(9\lambda_2+16\lambda_8)+\frac{\lambda_{13}}{\lambda_1}(2\lambda_3+\lambda_9)=-2$, 
our Lie algebra $L$ is given by
\begin{eqnarray}\label{lierelations}
\lie{e_0,e_i} &=& e_{i+1},\text{ for all } i=1,2,\dots,8,\nonumber\\
\lie{e_1,e_2} &=& e_4+e_5,\nonumber\\
\lie{e_1,e_3} &=& e_5+e_6,\nonumber\\
\lie{e_1,e_4} &=& 2e_6+3e_7+25e_8,\nonumber\\
\lie{e_1,e_5} &=& 3e_7+5e_8+50e_9,\nonumber\\
\lie{e_1,e_6} &=& 7e_8+5e_9,\nonumber\\
\lie{e_1,e_7} &=& 14e_9,\nonumber\\
\lie{e_1,e_8} &=& -e_9,\\
\lie{e_2,e_3} &=& -e_6-2e_7-25e_8,\nonumber\\
\lie{e_2,e_4} &=& -e_7-2e_8-25e_9,\nonumber\\
\lie{e_2,e_5} &=& -4e_8,\nonumber\\
\lie{e_2,e_6} &=& -7e_9,\nonumber\\
\lie{e_2,e_7} &=& e_9,\nonumber\\
\lie{e_3,e_4} &=& 3e_8-2e_9,\nonumber\\
\lie{e_3,e_5} &=& 3e_9,\nonumber\\
\lie{e_3,e_6} &=& -e_9,\nonumber\\
\lie{e_4,e_5} &=& e_9,\nonumber
\end{eqnarray}
and the other brackets $[e_i,e_j]$, $i>j$, equal to zero. We have chosen specifically this example because
it coincides with the suggested Lie algebra 
to be a counterexample that can be found in \cite[Section 12]{Rump}. 

Observe that, with this values of $\lambda_1,\dots,\lambda_{13}$, we can think of $L$ as a $\Z$-Lie algebra. We will 
denote by $L_p$ its reductions modulo $p$: $L_{p}=L\otimes_\Z \mathbb{F}_p=L/pL$, which is a $\mathbb{F}_p$-Lie algebra.
One of this $L_p$ will be our example satisfying the conditions of Theorem \ref{counterlie}, as we show now. 

First, we are going to translate the problem of the determination of injective Lie morphisms from $L\otimes K$ to $\mathfrak{u}_{11}(K)$
to a problem of solving a system of polynomial equations in several variables. 
Note that any Lie morphism $\rho: L\otimes K\to\mathfrak{u}_{11}(K)$ is determined by $\rho(e_0)$ and $\rho(e_1)$, because
$\rho(e_{i+1})=\rho([e_0,e_i])=[\rho(e_0),\rho(e_i)]$. Now take two matrices $E_0, E_1\in \mathfrak{u}_{11}(K)$. 
Take the coefficients of 
$E_0$ and $E_1$ as unknown variables $y_1,\dots,y_k$, and define by induction $E_{i+1}:=[E_0,E_i]=E_0E_i-E_iE_0$. 
Then, the map $e_i\mapsto E_i$ extends to a Lie morphism if and only if 
they satisfy the relations in (\ref{lierelations}); i.e. if they satisfy 

\begin{eqnarray*}
\mlie{E_1}{E_2} &-& (E_4+E_5)=0,\\
\mlie{E_1}{E_3} &-& (E_5+E_6)=0,\\
\mlie{E_1}{E_4} &-& (2E_6+3E_7+25E_8)=0,\\
\mlie{E_1}{E_5} &-& (3E_7+5E_8+50E_9)=0,\\
\mlie{E_1}{E_6} &-& (7E_8+5E_9)=0,\\
\mlie{E_1}{E_7} &-& 14E_9,\\
\mlie{E_1}{E_8} &+& E_9=0,\\
\mlie{E_2}{E_3} &+& E_6+2E_7+25E_8=0,\\
\mlie{E_2}{E_4} &+& E_7+2E_8+25E_9=0,\\
\mlie{E_2}{E_5} &+& 4E_8=0,\\
\mlie{E_2}{E_6} &+& 7E_9=0,\\
\mlie{E_2}{E_7} &-& E_9=0,\\
\mlie{E_3}{E_4} &-& (3E_8-2E_9)=0,\\
\mlie{E_3}{E_5} &-& 3E_9=0,\\
\mlie{E_3}{E_6} &+& E_9=0,\\
\mlie{E_4}{E_5} &-& E_9=0,
\end{eqnarray*}
and $\mlie{E_i}{E_j}=0$ for the other $i,j$ with $i>j$. 
The coefficients of this matrix relations are polynomials $f_1,\dots,f_l$ in the variables $y_1,\dots,y_k$ with coefficients over $\Z$
(all the coefficients in the relations are integers, and the only operations that appear are sums and products of matrices). 
Then, 
$E_0$ and $E_1$ defines a Lie morphism $\rho: L\otimes K\to\mathfrak{u}_{11}(K)$ if and only if $f_1=\dots=f_l=0$, 
with $f_1,\dots,f_l\in \Z[y_1,\dots,y_k]$, has a solution over $K$. 

Besides, $\rho: L\otimes K\to\mathfrak{u}_{11}(K)$ is injective if and only if $\rho(e_9)\neq 0$, because 
$Z(L)=\left\langle e_9\right\rangle$, and, if the kernel of $\rho$ is non-trivial, it intersects the center 
non-trivially (any non-zero ideal in a nilpotent Lie algebra intersects the center non-trivially \cite[Corollary 1.1.15]{CG}).
 This condition can also be 
translated in terms of a system of polynomial equations: all the coefficients in $\rho(e_9)$ are polynomials in the variables 
$y_1,\dots,y_k$ with coefficients in $\Z$; 
denote them by $f'_1,\dots,f'_r$. Then, $\rho(e_9)\neq 0$ if and only if $f'_1z_1+\cdots+f'_rz_r=1$ has a solution over $K$, where 
$z_1,\dots, z_r$ are new unknown variables. 

So there exists a Lie monomorphism $\rho: L\otimes K\to\mathfrak{u}_{11}(K)$ if and only if the system of equations 
$f_1=\dots=f_l=f'_1z_1+\cdots+f'_rz_r-1=0$, with $f_1,\dots,f_l,f'_1z_1+\cdots+f'_rz_r-1\in\Z[y_1,\dots,y_k,z_1,\dots,z_r]$, 
has a solution over $K$.
The results of \cite{Burde} show that this system has no solution over 
$K=\mathbb{C}$ using Gr\"{o}bner basis. We will show that this implies the characteristic $p$ case, by the following argument.

\begin{proof}{ \bf(of Theorem \ref{counterlie})}

We have reasoned that $L_p$ has an injective morphism to $\mathfrak{u}_{11}(\mathbb{F}_p)$ if and only if 
a concrete system of polynomial equations $\overline{f_1}=\dots=\overline{f_m}=\overline{0}$, with $f_1,\dots,f_m\in\Z[x_1,\dots,x_n]$, 
has a solution over $\mathbb{F}_p^n$. But $f_1=\dots=f_m=0$ has no solution over $\mathbb{C}^n$, so by 
Hilbert's Nullstellensatz, there exists $g_1,\dots,g_m\in \mathbb{C}[x_1,\dots,x_n]$ such that $f_1g_1+\cdots+f_mg_m=1$. 
The assumption $f_1,\dots,f_m\in\Z[x_1,\dots,x_n]$ implies that we can choose $g_1,\dots,g_m\in\mathbb{Q}[x_1,\dots,x_n]$: 
if we think of $f_1g_1+\cdots+f_mg_m=1$ as an equation for the coefficients of the $g_i$'s, it gives a 
linear system of equations with coefficients in $\Z$, so it has a solution over $\mathbb{Q}$. 

Moreover, there exists a $k\in \Z$ such that $g'_i:=kg_i$ is in $\Z[x_1,\dots,x_n]$ for any $i\in\{1,\dots,m\}$. 
Hence we get $f_1g'_1+\cdots+f_mg'_m=k$ in $\Z[x_1,\dots,x_n]$.
Now take a prime number $p$ such that $\gcd(k,p)=1$; there are an infinite number of them, 
so we can take $p\geq 12$. 
Then, the reduction of $f_1g'_1+\cdots+f_mg'_m=k$ modulo $p$ is 
$\overline{f_1}\overline{g'_1}+\cdots+\overline{f_m}\overline{g'_m}=\overline{k}\neq 0$, implying 
that the system $\overline{f_1}=\dots=\overline{f_m}=\overline{0}$ has 
no solution over $\mathbb{F}_p^n$ for this prime $p$. Or, equivalently, 
there is no injective Lie morphism $L_{p}\to \mathfrak{u}_{11}(\mathbb{F}_p)$ for this prime $p$. 
\end{proof}

\begin{remark}\rm{
We shall write more explicitly the relation between \cite{Burde} and our current paper. A LSA (left symmetric algebra)
 (also known as Pre-Lie algebra) over a field $K$ is a $K$-vector space $S$ equipped with a bilinear product 
$\cdot:S\times S\to S$ 
such that $x\cdot(y\cdot z)-(x\cdot y)\cdot z=y\cdot(x\cdot z)-(y\cdot x)\cdot z$, for every $x,y,z\in S$. 
If we define $[x,y]:=x\cdot y-y\cdot x$, then $S$ is equipped with a Lie algebra structure over $K$.
 One can show (see \cite[Proposition 9.1]{Rump}) that having a LSA is equivalent to having a Lie algebra $L$ with a bijective $1$-cocycle 
 $\pi: L\to L_{\lambda}$, where $L_{\lambda}$ is the underlying vector space of $L$, with respect to 
 the left adjoint action of $L$ over $L_\lambda$
 (recall that, given a Lie algebra $L$, a vector space $V$, and a Lie morphism $\gamma: L\to End_K(V)$, a $1$-cocycle
  is a map $\pi: L\to V$ such that $\pi([x,y])=\gamma_x( \pi(y))-\gamma_y( \pi(x))$).

It was asked by Milnor \cite{Milnor} whether every solvable Lie group admits a complete left-invariant 
affine structure (Auslander in \cite[Theorem 1.1]{Auslander} proved that any Lie group admitting a complete left-invariant affine structure 
is solvable). Equivalently, he asked whether every solvable Lie algebra admits a complete 
LSA structure (see \cite[Introduction and Lemma 1]{Burde} for the equivalence of the two questions). 
For some time it was conjectured that it was always possible to define a LSA structure 
over any nilpotent Lie algebra over $\mathbb{C}$. 
But in \cite{Benoist} and \cite{Burde} it appeared the first examples answering in the negative this question.
The argument there basically makes use of the fact that an LSA of dimension $n$ has always a 
faithful Lie algebra representation of dimension $n+1$. So they find examples of Lie algebras of 
dimension $n$ with no injective morphism to $\mathfrak{gl}_{n+1}(\mathbb{C})$, with a combination of 
theoretical reductions and the resolution of a system of polynomial equations with the computer. 
Our $L\otimes \mathbb{C}$ is one of these examples. 

So, in fact, what we show
in Theorem \ref{counterlie} is that, for $p$ big enough, $L\otimes\mathbb{F}_p$ is an example of nilpotent Lie algebra 
over $\mathbb{F}_p$ without any LSA structure. Using the Lazard correspondence, and the results about the additive group 
of a left brace explained in Section $2$, we can use this result to find 
an example of a nilpotent group $G$ with no bijective $1$-cocycles to an abelian group. Recall that bijective $1$-cocycles 
of groups $\pi: G\to A$, where $A$ is an abelian group, are equivalent to left braces with multiplicative 
group isomorphic to $G$ and additive group 
isomorphic to $A$, by \cite[Theorem 2.1]{CJR}. 

Thus, summarizing, brace theory is the study of abelian bijective $1$-cocycles over groups, and LSA structures is the study of 
bijective $1$-cocycles over Lie rings and algebras. That is what makes the two theories analogous.

We hope that arguments similar to the ones used in our paper would be useful in the future to relate results 
in characteristic $0$, and results in characteristic $p$ for $p$ big enough. }
\end{remark}

\begin{remark}\label{errorRump}\rm{
Our first attempt was to use $L_p$ for $p=11$, as is suggested in \cite{Rump}, and repeat the computer calculations of \cite{Burde}. 
But, when we take $L_{11}$, and use on it (an adaptation of) the computer programs described in \cite{Burde}, 
we find a Lie monomorphism $\rho: L_{11}\hookrightarrow \mathfrak{u}_{11}(\mathbb{F}_{11})$, defined by

\setcounter{MaxMatrixCols}{20}
$$
E_0=\rho(e_0)=
\begin{pmatrix}
0& 1& 0& 0& 0& 0& 0& 0& 0& 0& 0\\
0& 0& 1& 0& 0& 0& 0& 0& 0& 0& 0\\
0& 0& 0& 1& 0& 0& 0& 0& 0& 0& 0\\
0& 0& 0& 0& 1& 0& 0& 0& 0& 0& 0\\
0& 0& 0& 0& 0& 1& 0& 0& 0& 0& 0\\
0& 0& 0& 0& 0& 0& 1& 0& 0& 0& 0\\
0& 0& 0& 0& 0& 0& 0& 1& 0& 0& 0\\
0& 0& 0& 0& 0& 0& 0& 0& 1& 0& 0\\
0& 0& 0& 0& 0& 0& 0& 0& 0& 0& 0\\
0& 0& 0& 0& 0& 0& 0& 0& 0& 0& 1\\
0& 0& 0& 0& 0& 0& 0& 0& 0& 0& 0
\end{pmatrix},
$$

$$
E_1=\rho(e_1)=
\begin{pmatrix}
0& 5& 8& 0& 0& 0& 0& 0& 0& 0& 0\\
0& 0& 0& 9& 10& 0& 0& 0& 0& 0& 0\\
0& 0& 0& 0& 4& 0& 7& 6& 5& 0& 0\\
0& 0& 0& 0& 0& 1& 3& 0& 0& 0& 0\\
0& 0& 0& 0& 0& 0& 10& 0& 8& 0& 1\\
0& 0& 0& 0& 0& 0& 0& 7& 10& 0& 8\\
0& 0& 0& 0& 0& 0& 0& 0& 2& 0& 4\\
0& 0& 0& 0& 0& 0& 0& 0& 0& 0& 0\\
0& 0& 0& 0& 0& 0& 0& 0& 0& 0& 1\\
0& 0& 0& 0& 0& 0& 0& 0& 0& 0& 0\\
0& 0& 0& 0& 0& 0& 0& 0& 0& 0& 0
\end{pmatrix}.
$$}
\end{remark}
\begin{remark}\rm{
In fact, besides the theoretical argument of this section, 
we have repeated the computer calculations of \cite{Burde} for $p=23$, and we have obtained that $L_{23}$ is a 
Lie algebra satisfying the properties of Theorem \ref{counterlie}. So $L_{23}$ is a concrete counterexample, 
independent of an unknown prime $p$.}
\end{remark}

Now it remains to find a way to find a counterexample in a more theoretical way, 
using an argument that does not depend on the calculations of a computer. Besides, 
it remains as an open problem to characterize which finite solvable groups (in particular, 
which finite $p$-groups) are isomorphic to the multiplicative group of a left brace.

\section*{Acknowledgments}
I thank S.C. Featherstonhaugh for providing me a copy of his PhD thesis, thank D. Burde 
for sending me part of the computer programs used in \cite{Burde}, and thank Alexandre Turull 
for useful comments. I would also like to thank Ferran Ced\'o for carefully reading a first draft 
of the paper and for providing many useful suggestions. 

 Research partially supported by DGI MINECO MTM2011-28992-C02-01, and by
DGI MINECO MTM2014-53644-P.

\vspace{30pt}
 \noindent \begin{tabular}{llllllll}
 D. Bachiller \\
 Departament de Matem\`atiques \\
 Universitat Aut\`onoma de Barcelona  \\
08193 Bellaterra (Barcelona), Spain  \\
dbachiller@mat.uab.cat 
\end{tabular}

\end{document}